\documentclass{article}
\usepackage[utf8]{inputenc}
\usepackage{listings}
\usepackage{amsmath}
\usepackage{amsthm}
\usepackage{amsfonts}
\usepackage{amssymb}
\usepackage[english]{babel}
\usepackage[a4paper]{geometry}
\usepackage{graphicx}
\usepackage{color,psfrag}
\usepackage{fancyhdr}
\usepackage{cite}
\usepackage{makeidx}
\usepackage{enumerate}
\usepackage{mathpazo}
\usepackage{mathrsfs}
\usepackage{float}
\usepackage{datetime}
\usepackage{titlesec}
\usepackage[labelfont=bf,textfont=it,labelsep=period,width=.8\textwidth]{caption}
\usepackage[unicode,pdfstartview={XYZ null null 1},pdfview={XYZ null null 1},pdfpagemode=UseOutlines]{hyperref}
\DeclareFontFamily{U}{wncy}{}
\DeclareFontShape{U}{wncy}{m}{n}{<->wncyr10}{}
\DeclareSymbolFont{mcy}{U}{wncy}{m}{n}
\DeclareMathSymbol{\Sh}{\mathord}{mcy}{"58}

\newcommand{\A}{{\cal A}}

\newcommand{\R}{\mathbb R}
\newcommand{\Q}{\mathbb Q}
\newdate{date}{10}{03}{2017}

\newcommand{\F}{\mathbb F}

\newtheorem{theorem}{Theorem}
\newtheorem{fact}[theorem]{Fact}

\newtheorem{corollary}[theorem]{Corollary}
\newtheorem{proposition}[theorem]{Proposition}
\newtheorem{definition}[theorem]{Definition}

\newtheorem{problem}{Problem}
\theoremstyle{remark}\newtheorem{example}[theorem]{Example}
\theoremstyle{remark}\newtheorem{remark}[theorem]{Remark}

\title{Algebraic conditions for additive functions over the reals and over finite fields}
\author{\normalsize
 \begin{minipage}{0.3\linewidth}
    \large
    P\'eter Kutas \\
    \footnotesize
Institute for Computer Science and Control,
Hungarian Acad. Sci.
    \texttt{kutasp@sztaki.hu} \\
    \normalsize
  \end{minipage}
  }
%\author{P\'eter Kutas}
%\date{April 2017}

\begin{document}

\maketitle
\begin{abstract}
Let $C$ be an affine plane curve. We consider additive functions $f: K\rightarrow K$ for which $f(x)f(y)=0$, whenever $(x,y)\in C$. We show that if $K=\mathbb{R}$ and $C$ is the hyperbola with defining equation $xy=1$, then there exist nonzero additive functions with this property. Moreover, we show that such a nonzero $f$ exists for a field $K$ if and only if $K$ is transcendental over $\Q$ or over $\F_p$, the finite field with $p$ elements. We also consider the general question when $K$ is a finite field. We show that if the degree of the curve $C$ is large enough compared to the characteristic of $K$, then $f$ must be identically zero. 
\end{abstract}
\bigskip
\noindent
{\textbf{ Keywords}:} Additive functions, Valuation Rings, Finite fields.
\bigskip

\noindent{\textbf{Mathematics Subject Classification}: 39B22, 39B52, 11G20.}

\section{Introduction}

Let $K$ be field. A function $f:K\rightarrow K$ is additive if 
$$f(x+y)=f(x)+f(y)$$
holds for every $x,y\in K$. If the characteristic of $K$ is zero then an additive function is also $\Q$-linear \cite[Theorem 5.2.1]{Kuczma}. 
For further information on additive functions (and functional equations in general) the reader is referred to \cite{Kuczma}. 

It is well-known that if $K=\mathbb{R}$ then every continuous additive function $f:\mathbb{R}\rightarrow \mathbb{R}$ is of the form $f(x)=f(1)x$. However, there exist non-continuous additive functions which behave quite irregularly. Most prominently their graph is dense in the plane. Many similar theorems can be found in \cite{Kuczma}, however most are of analytic flavour. 

The following more algebraic question was posed by Gy. Szab\'o \cite{Szabo} (motivated by a question of W. Benz \cite{Benz2}): Let $C: x^2+y^2=1$ be the unit circle and suppose that $f:\mathbb{R}\rightarrow \mathbb{R}$ is additive and $f(x)f(y)=0$ whenever $(x,y)\in C$. Does this imply that $f$ is identically zero? The solution was published in a paper \cite{KRS} by Z. Kominek, L. Reich and J. Schwaiger, where the authors prove that the implication is true. In the same paper the authors also consider different curves (instead of the unit circle), curves of the form $y=p(x)$, where $p$ is a polynomial with rational coefficients, and the hyperbola $x^2-y^2=1$. In the paper \cite{BFK} the curves $x^2-ny^2=1$ (where $n$ is a square-free positive integer) are studied. In all cases, the implication that $f$ is identically zero turned out to be true. Moreover, Z. Boros and W. Fechner generalized the unit circle problem to generalized polynomials \cite{BF} and in \cite{BFK} a stability version of the problem is examined. In \cite{BFK} the case when the curve is $C:xy=1$ was left open. These previous results suggested that such an $f$ must be identically zero as well.

Another motivation to study this problem comes from the following results. Let $f$ be an additive function such that $f(x)f(\frac{1}{x})=b$, where $b$ is fixed. If $b$ is negative, then no such function exists. If $b$ is positive, then one can show that $f$ must be continuous \cite[Chapter I]{Kannappan}, thus only the case $b=0$ was not known before. Interestingly, there exists a non-continuous additive $f$, for which $f(x)f(\frac{1}{x})>0$ for every nonzero $x$ \cite{Benz},\cite{Bergman}. 

One of the main results of this paper is that surprisingly there exists an additive $f:\mathbb{R}\rightarrow\mathbb{R}$ such that $f(x)f(\frac{1}{x})=0$ (when $x\neq 0$) which is not identically zero on $\mathbb{R}$. The example uses the theory of valuations of fields. Moreover, with this technique, we are able to construct a family of curves $C$, for which the above implication is false. 

We also consider the general problem over finite fields. Let $C$ be an absolutely irreducible smooth curve and let $f:\mathbb{F}_{p^k}\rightarrow \mathbb{F}_{p^k}$ be an additive function such that $f(x)f(y)=0$ whenever $(x,y)\in C$. In this setting we are able to prove more general results. Using the classical Hasse-Weil bound on the number of points of a smooth absolutely irreducible curve defined over a finite field, we are able to show that if the degree of the curve is small compared to the characteristic of the field, then $f$ must be identically zero. Our approach can be applied to higher dimensional varieties as well. 

The structure of the paper is the following. Section 2 is devoted to preliminaries, it does not contain any new results. We recall basic facts of valuation theory and the Hasse-Weil theorem. Section 3 is devoted to the finite field case, whilst in Section 4 we study the problem over the real numbers.

\section{Preliminaries}

This section is divided into two subsections. In the first subsection we define some notions and state some theorems concerning algebraic curves over finite fields. In the second subsection, we introduce the concept of valuations of fields.

\subsection{Curves over finite fields}

The theory of algebraic curves is a vast topic, therefore this subsection is not meant to be a thorough introduction (for more details the reader is referred to \cite{Fulton}). Our goal is to establish a bound on the number of points over a finite field, of a smooth projective curve. 

\begin{definition}
Let $K$ be a field. An affine (resp. projective) plane curve is an affine (resp. projective) variety of dimension one in $\overline{K}^2$ (resp $\mathbb{P}\overline{K}^2$). Here $\overline{K}$ denotes the algebraic closure of $K$, and $\mathbb{P}\overline{K}^2$ denotes the projective plane over $\overline{K}$.
\end{definition}
\begin{remark}
Informally one can think of the following. An affine plane curve (defined over $K$) is the zero set of an irreducible polynomial $f\in K[x,y]$ in $\overline{K}^2$. A projective plane curve is the zero set of an irreducible homogeneous polynomial $g\in K[x,y,z]$ in $\mathbb{P}\overline{K}^2$, where $\mathbb{P}\overline{K}^2$ denotes the projective plane (over $\overline{K}$). 
\end{remark}

In this paper we will consider affine curves, however certain theorems are stated in terms of projective curves. 

Our main goal is to give a bound on the number of points of a smooth, absolutely irreducible curve. We do not define these notions here, as we will not use their definition. It is enough to note that this is a large (and the most interesting) class of affine curves. For further details the reader is referred to \cite{Fulton}. The bound contains a quantity called the genus of the curve. We do not define the genus here (as it is quite complicated and we do not use it later), the reader is referred to \cite[Section 8.3]{Fulton}. We recall the well-known Hasse-Weil theorem \cite{HW}:
\begin{fact}[Hasse-Weil]\label{HW}
Let $p$ be a prime number and let $C$ be a smooth absolutely irreducible projective curve defined over the finite field $\F_{p^k}$. Let $N(C)$ be the number of points of $C$ in $\F_{p^k}$ and let $g$ be the genus of the curve. Then the following inequality holds:
$$ |N(C)-p^k-1|\leq 2gp^{\frac{k}{2}}.$$
\end{fact}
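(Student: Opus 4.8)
This is the classical Riemann hypothesis for curves over finite fields, due to Weil, so rather than reprove it from scratch I would recall the standard route through the zeta function and indicate where the real work lies. Writing $q = p^k$, the plan is to package the point counts $N_m = |C(\mathbb{F}_{q^m})|$ into the zeta function
$$Z(C,T) = \exp\!\left(\sum_{m\geq 1} N_m \frac{T^m}{m}\right),$$
and to establish three facts about it: that $Z(C,T) = P(T)/\big((1-T)(1-qT)\big)$ for a polynomial $P(T)\in\mathbb{Z}[T]$ of degree $2g$ with $P(0)=1$; that $Z$ satisfies a functional equation reflecting $T\mapsto 1/(qT)$; and --- the deep input --- that if $P(T)=\prod_{i=1}^{2g}(1-\alpha_i T)$, then every reciprocal root satisfies $|\alpha_i| = \sqrt{q} = p^{k/2}$. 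The first two are consequences of the Riemann--Roch theorem and Serre duality on $C$, which is where the genus $g$ enters, as the dimension of the space of global differentials and hence the degree of $P$.

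Granting these, the bound falls out by comparing coefficients. Taking the logarithmic derivative of $Z$ and matching the coefficient of $T^m$ yields
$$N_m = q^m + 1 - \sum_{i=1}^{2g}\alpha_i^m,$$
so in particular $N(C) = N_1 = p^k + 1 - \sum_{i=1}^{2g}\alpha_i$. Since each $|\alpha_i| = p^{k/2}$ and there are exactly $2g$ of them, the triangle inequality gives
$$\Big|N(C) - p^k - 1\Big| = \Big|\sum_{i=1}^{2g}\alpha_i\Big| \leq 2g\,p^{k/2},$$
which is precisely the asserted inequality.

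The entire difficulty is concentrated in the estimate $|\alpha_i| = p^{k/2}$, the Riemann hypothesis for $C$; this is the step I expect to be the genuine obstacle, everything else being formal manipulation of the zeta function. I would prove it by Weil's geometric argument on the surface $S = C\times C$: one interprets the $\alpha_i$ as eigenvalues of the Frobenius endomorphism acting on the Jacobian of $C$ (equivalently, on first cohomology), realises the point counts via the Lefschetz trace formula as intersection numbers of the graph $\Gamma$ of Frobenius with the diagonal $\Delta$ on $S$, and then invokes the positivity of the intersection form --- the Hodge index theorem, equivalently the Castelnuovo--Severi inequality --- to extract a Cauchy--Schwarz-type bound forcing $|\alpha_i|^2 = q$. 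An alternative is the elementary Stepanov--Bombieri method using auxiliary functions, but the genus-dependent shape of the bound emerges most transparently through the zeta function. As this is a standard and thoroughly documented result, within the paper I would simply invoke it as Fact~\ref{HW} and apply the displayed inequality.
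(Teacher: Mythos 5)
Your proposal is correct: the paper gives no proof of this statement at all, simply recording it as a cited classical fact (referring to Weil's 1949 paper), which is exactly what you conclude by saying you would invoke it as Fact~\ref{HW}. Your sketch of the standard argument --- rationality and functional equation of the zeta function via Riemann--Roch, the Riemann hypothesis $|\alpha_i|=p^{k/2}$ via the Castelnuovo--Severi/Hodge index argument on $C\times C$, and the triangle inequality giving $|N(C)-p^k-1|=\bigl|\sum_i \alpha_i\bigr|\leq 2gp^{k/2}$ --- is accurate, but it is supplementary to what the paper actually does.
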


Fortunately, the genus of the curve can be bounded using the degree of the curve (the degree of its defining polynomial) \cite[Section 8.3, Corollary 1]{Engler}:
\begin{fact}
Let $C$ be a smooth projective curve of degree $d$. Than its genus $g$ satisfies the inequality  $$g\leq \frac{(d-1)(d-2)}{2}.$$
\end{fact}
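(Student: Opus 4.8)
The plan is to establish the \emph{equality} $g = \frac{(d-1)(d-2)}{2}$ for a smooth plane curve via the adjunction formula, and then to recover the stated inequality for possibly singular plane models by passing to the normalization. Throughout I would work over the algebraic closure $\overline{K}$, where the genus is insensitive to the ground field.

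First I would compute the genus of a smooth curve $C\subset\mathbb{P}^2$ of degree $d$ using intersection theory on the projective plane. Such a curve is linearly equivalent to $dH$, where $H$ is the class of a line, and the canonical class of $\mathbb{P}^2$ is $K_{\mathbb{P}^2}=-3H$. The adjunction formula $2g-2 = C\cdot(C+K_{\mathbb{P}^2})$ then gives $2g-2 = dH\cdot(d-3)H = d(d-3)$, using $H\cdot H = 1$, so that $g = \frac{d(d-3)}{2}+1 = \frac{(d-1)(d-2)}{2}$. An equivalent and more hands-on route is to exhibit the space of regular differentials directly: on a smooth plane curve with defining polynomial $h$, every global differential can be written as $\frac{P(x,y)}{\partial h/\partial y}\,dx$ with $\deg P \le d-3$, and the number of monomials of degree at most $d-3$ in two variables is exactly $\binom{d-1}{2}=\frac{(d-1)(d-2)}{2}$, giving the same value for $g=\dim H^0(C,\Omega_C)$.

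To obtain the inequality in full generality I would pass to the normalization $\widetilde{C}\to C$ of a degree-$d$ plane curve that may carry singularities. The arithmetic genus $p_a(C)=\frac{(d-1)(d-2)}{2}$ depends only on the degree and is unchanged by singularities, while the geometric genus $g=g(\widetilde{C})$ is related to it by the formula $p_a(C) = g + \sum_P \delta_P$, where $\delta_P\ge 0$ is the delta-invariant of each singular point $P$. Since every $\delta_P$ is nonnegative, this yields $g \le p_a(C) = \frac{(d-1)(d-2)}{2}$ at once, with equality precisely when $C$ is smooth.

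The main obstacle is the genus computation itself, since the genus is intrinsically defined through sheaf cohomology (equivalently through regular differentials), so any honest proof must relate this cohomological invariant to the concrete degree of the defining polynomial; this is exactly the content of the classical genus--degree formula, and it requires either intersection theory together with adjunction, or Riemann--Roch together with an explicit description of the canonical linear system via residues of rational differentials. For the purposes of this paper it is cleanest to cite this formula, as we have done, and to observe that the nonnegativity of the delta-invariants upgrades the equality for smooth curves to the stated inequality.
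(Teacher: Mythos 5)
The paper offers no proof of this Fact at all: it is quoted as a known result with a citation to the literature, so there is nothing internal to compare your argument against. What you have written is the standard, correct derivation of the genus--degree formula and its singular refinement: adjunction on $\mathbb{P}^2$ (or, equivalently, the explicit description of regular differentials as $P\,dx/(\partial h/\partial y)$ with $\deg P\le d-3$) gives the equality $g=\frac{(d-1)(d-2)}{2}$ for a smooth plane curve, and the relation $p_a(C)=g(\widetilde{C})+\sum_P\delta_P$ with $\delta_P\ge 0$ upgrades this to the inequality for singular plane models. This is exactly the content of the reference the paper points to, and it is the right level of generality for the application in the paper (where the curve is given by a degree-$d$ plane equation and only an upper bound on the genus is needed to feed into Hasse--Weil). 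The only caveat worth recording is that each ingredient you invoke --- adjunction, the residue description of $H^0(C,\Omega_C)$, and the delta-invariant formula --- is itself a substantial classical theorem, so your argument is a correct reduction to standard results rather than a self-contained proof; but since the paper itself treats the statement as a black box, citing these results is entirely appropriate.
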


We conclude the subsection by a fact (which is a consequence of B\'ezout's theorem \cite[Section 5.3]{Fulton}):
\begin{fact}\label{Bezout}
Let $C_1$ and $C_2$ be projective curves defined over a field $K$ of degree $d_1$ and $d_2$ respectively. Then $C_1$ and $C_2$ intersect in at most $d_1d_2$ points.
\end{fact}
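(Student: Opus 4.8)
The statement is the weak (inequality) form of B\'ezout's theorem, and since it only asks for an upper bound, the most economical self-contained route is the elimination-theoretic one via resultants. I first reduce to the algebraically closed setting: the $K$-rational points of $C_1\cap C_2$ form a subset of the common points over $\overline{K}$, so it suffices to bound the latter. Write $F,G\in\overline{K}[x,y,z]$ for homogeneous defining forms of $C_1,C_2$ of degrees $d_1,d_2$. The assertion is only meaningful when $C_1\neq C_2$ (otherwise the intersection is the whole curve, an infinite set); since by our definition a curve is irreducible, two distinct curves have defining forms sharing no common factor, so I may assume $F$ and $G$ are coprime in $\overline{K}[x,y,z]$.

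Next I would set up the resultant. Because $\overline{K}$ is infinite, a generic linear change of projective coordinates lets me assume that the point $[0:0:1]$ lies on neither curve; viewing $F$ and $G$ as polynomials in $z$ with coefficients in $\overline{K}[x,y]$, this makes both leading coefficients nonzero, so both have positive degree in $z$. Then $R(x,y):=\mathrm{Res}_z(F,G)$ is a homogeneous binary form of degree $d_1d_2$, and by the standard property of resultants it is not identically zero precisely because $F$ and $G$ are coprime. A nonzero binary form of degree $d_1d_2$ has at most $d_1d_2$ zeros in $\mathbb{P}^1$.

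It remains to relate the zeros of $R$ to the intersection points. The defining property of the resultant is that $R(a,b)=0$ exactly when $F(a,b,z)$ and $G(a,b,z)$ share a root in $z$, i.e. when the line $\{[a:b:t]\}$ through $[0:0:1]$ meets $C_1\cap C_2$; in particular this already forces $C_1\cap C_2$ to be finite. To pass from a count on $\mathbb{P}^1$ to a count of actual intersection points, I would refine the coordinate choice so that $[0:0:1]$ also avoids the finitely many lines joining pairs of intersection points. Then projection from $[0:0:1]$ is injective on $C_1\cap C_2$, whence the number of intersection points is at most the number of zeros of $R$, namely $d_1d_2$, and restricting to $K$-points only decreases this. (Conceptually the cleanest derivation would instead invoke the full B\'ezout equality $\sum_P I_P(C_1,C_2)=d_1d_2$ over $\overline{K}$ and use $I_P\geq 1$, but that presupposes the theory of intersection multiplicities; the resultant argument yields the inequality directly.)

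The technical heart, and the main obstacle, lies in the two standard facts about resultants invoked above: that $\mathrm{Res}_z(F,G)$ of homogeneous forms is itself homogeneous of degree $d_1d_2$ in $x,y$, and that it vanishes exactly at the projections of common zeros. Both require the coprimality of $F,G$ and careful handling of the behaviour ``at infinity'' in the $z$-direction, which is precisely what the normalization of the leading coefficients controls. I expect the fiddliest bookkeeping to be ensuring that no intersection point is absorbed into the center of projection and that the generic coordinate changes can be made simultaneously, both of which rest on $\overline{K}$ being infinite.
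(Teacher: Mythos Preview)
Your argument is correct: the resultant-based route via a generic projection from a point off both curves and off all secant lines of the (finite) intersection is the standard elementary proof of the inequality form of B\'ezout. The only subtleties are exactly the ones you flag, and they are handled by the infinitude of $\overline{K}$.

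As for comparison with the paper: there is nothing to compare. The paper does not prove this statement at all; it is recorded as a \emph{Fact} and simply attributed to B\'ezout's theorem with a reference to Fulton's textbook. Your write-up therefore goes well beyond what the paper supplies. If anything, the paper's implicit route is the one you mention parenthetically at the end --- invoke the full equality $\sum_P I_P(C_1,C_2)=d_1d_2$ from the cited source and drop multiplicities --- whereas you give a direct, self-contained derivation of the inequality that avoids intersection multiplicities altogether.
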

\begin{remark}
Actually counting multiplicities they intersect in exactly $d_1d_2$ points over algebraically closed fields.
\end{remark}

This fact will be useful for us as we later need a bound on the number of affine points of a smooth curve. 

\subsection{Valuations}

This subsection is based on \cite[Chapter 2 and 3]{Engler}. We define the notion of valuations of a field and state a theorem of Chevalley, about the extensions of valuations. First we recall the notion of an ordered abelian group.

\begin{definition}
An abelian group $(\Gamma,+,0)$ together with a binary relation $\leq$ is called {\em ordered} if the following conditions hold for all $\gamma, \delta, \lambda\in\Gamma$:
\begin{enumerate}
\item $\gamma\leq \gamma$,
\item if $\gamma\leq \delta$ and $\delta\leq \gamma$ then $\gamma=\delta$,
\item if $\gamma\leq \delta$ and $\delta\leq \lambda$ then $\gamma\leq \lambda$,
\item either $\gamma\leq \delta$ or $\delta\leq \gamma$,
\item $\gamma\leq \delta$ implies that $\gamma+\lambda\leq \delta+\lambda$. 
\end{enumerate}
\end{definition}
\begin{example}
The real numbers with respect to addition form an ordered abelian group. 
\end{example}

\begin{definition}
Let $K$ be a field and let $\Gamma$ be an ordered abelian group. Let $\infty$ be a symbol for which $\infty\geq x$ holds for every $x\in\Gamma$. Then a valuation on $K$ is a function $v:K\rightarrow \Gamma\cup \{\infty\}$ with the following properties:

\begin{enumerate}
    \item $v(x)=\infty$ if and only if $x=0$,
    \item $v(xy)=v(x)+v(y)$,
    \item $v(x+y)\geq min(v(x),v(y))$.
\end{enumerate}
\end{definition}
\begin{example}\label{ex}
\begin{itemize}
    \item Every field has the trivial valuation, where $v(x)=0$, whenever $x\neq 0$.
    \item Let $K=\Q(t)$, the field of rational functions over $\Q$. Let $f,g\in \Q[t]$. Then 
    $$v(\frac{f}{g})=deg(g)-deg(f)$$
    is a valuation (the degree of the zero polynomial is $-\infty$).
    \item Let $K=\Q$ and let $p$ be a prime number. Then every rational number $\frac{a}{b}$ can be written in a form $p^k\frac{a'}{b'}$, where $a'$ and $b'$ are no longer divisible by $p$. Then $v(\frac{a}{b})=k$ is a valuation on $\Q$, called the $p$-adic valuation.
\end{itemize}
\end{example}

Valuations are a key concept in algebraic number theory. We only need a fraction of the theory. First we observe that elements with nonnegative valuation form a subring with a special property:
\begin{proposition}\label{val}
Let $K$ be a field and let $v$ be a valuation on $K$. Let $O=\{x\in K| v(x)\geq 0\}$. Then $O$ has the following two properties:
\begin{enumerate}
\item $O$ is a subring of $K$.
\item For every nonzero element $x\in K$, either $x$ or $\frac{1}{x}$ is in $O$.
\end{enumerate}
\end{proposition}
\begin{proof}
We have to show that $O$ is closed under addition and multiplication. Let $x,y\in O$. The second property of a valuation implies that $v(x+y)\geq min(v(x),v(y))\geq 0$, which proves that $O$ is closed under addition. By the third property of a valuation $v(xy)=v(x)+v(y)\geq 0$, hence $O$ is closed under multiplication. The second part of the statement follows again from the third property. 
\end{proof}

This motivates the following definition:
\begin{definition}
Let $K$ be a field. Then a subring $O$ is a valuation ring if for every nonzero element $x\in K$ either $x$ or $\frac{1}{x}$ is in $O$. 
\end{definition}

Valuation rings always correspond to valuations, i.e., if $O$ is a valuation ring of $K$ then there exists a valuation on $K$, such that $O$ consists of those elements whose valuation is nonnegative \cite[Proposition 2.1.2]{Engler}. Valuation rings have a unique maximal ideal (the elements whose valuation is positive). Now we state Chevalley's theorem \cite[Theorem 3.1.1.]{Engler}:

\begin{fact}[Chevalley] \label{Chevalley}
Let $K$ be a field and let $R$ be a subring of $K$ with a prime ideal $P$. Then there exists a valuation ring $O$ of $K$ with maximal ideal $M$ such that $R\subset O$ and $M\cap R=P$.
\end{fact}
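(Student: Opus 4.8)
The plan is to prove this by Zorn's lemma after a reduction to the local case. First I would replace $R$ by its localization $R_P$, a local ring whose maximal ideal $\mathfrak{m}=PR_P$ satisfies the standard contraction identity $\mathfrak{m}\cap R=P$; since $K$ is a field, $R_P$ still sits inside $K$. Say that a local subring $(O,M)$ of $K$ \emph{dominates} a local subring $(A,\mathfrak{m}_A)$ if $A\subseteq O$ and $M\cap A=\mathfrak{m}_A$, equivalently $\mathfrak{m}_A\subseteq M$. A valuation ring $O$ with maximal ideal $M$ dominating $(R_P,\mathfrak{m})$ then satisfies $M\cap R=(M\cap R_P)\cap R=\mathfrak{m}\cap R=P$, so it suffices to produce a valuation ring dominating $(R_P,\mathfrak{m})$.

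Next I would apply Zorn's lemma to the set $\Sigma$ of all local subrings $(A,\mathfrak{m}_A)$ of $K$ dominating $(R_P,\mathfrak{m})$, ordered by domination. The union of a chain in $\Sigma$ is again a local subring of $K$, its unique maximal ideal being the union of the $\mathfrak{m}_A$, and it still dominates $(R_P,\mathfrak{m})$; hence every chain has an upper bound and a maximal element $(O,M)$ exists. The claim is that this maximal $O$ is a valuation ring of $K$, which together with $M\cap R_P=\mathfrak{m}$ completes the proof.

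To see that $O$ is a valuation ring, fix $x\in K$ with $x\neq 0$; I must show $x\in O$ or $x^{-1}\in O$. The key reduction is the sub-claim that at least one of the ideals $MO[x]$ and $MO[x^{-1}]$ generated by $M$ is a proper ideal. Granting this, suppose $MO[x]\neq O[x]$. Then $MO[x]$ lies in a maximal ideal $\mathfrak{n}$ of $O[x]$, and since $M\subseteq\mathfrak{n}\cap O\subsetneq O$ with $M$ maximal in $O$, we get $\mathfrak{n}\cap O=M$. Thus the localization $(O[x]_{\mathfrak{n}},\mathfrak{n}O[x]_{\mathfrak{n}})$ dominates $(O,M)$ and contains $x$, so maximality of $O$ forces $O[x]_{\mathfrak{n}}=O$ and $x\in O$; the case $MO[x^{-1}]\neq O[x^{-1}]$ gives $x^{-1}\in O$ symmetrically.

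It remains to establish the sub-claim, which I expect to be the main obstacle, since it is a minimal-degree elimination argument rather than formal nonsense. Suppose for contradiction that $1=\sum_{i=0}^{n}a_i x^i$ and $1=\sum_{j=0}^{m}b_j x^{-j}$ with all $a_i,b_j\in M$, chosen so that $n+m$ is minimal, and assume $n\geq m\geq 1$ (the cases $n=0$ or $m=0$ force $1\in M$, impossible). Because $b_0\in M$ and $M$ is the maximal ideal of the local ring $O$, the element $1-b_0$ is a unit of $O$; dividing the second relation by it and using that $M$ is an ideal, I may assume $b_0=0$, and multiplying by $x^m$ then expresses $x^m$ as an $M$-combination of $1,x,\dots,x^{m-1}$. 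Substituting this into the top term $a_n x^n$ of the first relation lowers its degree to at most $n-1$ while keeping every coefficient in $M$, contradicting minimality of $n+m$. This resultant-style reduction is the crux of the argument; everything else is bookkeeping.
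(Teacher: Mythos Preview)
The paper does not actually prove this statement: it is recorded as a \emph{Fact} with a citation to \cite[Theorem 3.1.1]{Engler}, so there is no in-paper proof to compare against. Your argument is the standard one (localize at $P$, use Zorn's lemma on local subrings of $K$ ordered by domination, and show maximality forces the valuation-ring property via the two-ideal sub-claim), and it is correct as written; in particular your minimal-degree elimination for the sub-claim is the usual resultant-style trick and goes through. This is essentially the proof one finds in the cited reference, so you have supplied exactly what the paper defers to the literature.
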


A corollary of Fact \ref{Chevalley} is that a valuation on a smaller field can be extended to a larger field (however, the value group may grow):
%The ring $O$ is called the valuation ring of $K$ (for the valuation $v$). $O$ has unique maximal ideal consisting of those elements whose val Now we state a theorem, that a valuation on a field $K$ can always be extended to field extension $L$ of $K$ \cite[Section 3.1]{Engler}. 

\begin{fact}\label{ext}
Let $K$ be a field with a valuation $v$ and let $L$ be an extension of $K$. Then there exists a valuation $w$ on $L$ such that $w$ restricted to $K$ is $v$. 
\end{fact}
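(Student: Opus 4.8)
The plan is to reduce everything to valuation rings, where Fact~\ref{Chevalley} does the real work. Starting from the valuation $v$ on $K$, I would first pass to its valuation ring $O=\{x\in K\mid v(x)\geq 0\}$, which by Proposition~\ref{val} is a subring of $K$ in which every nonzero $x$ satisfies $x\in O$ or $x^{-1}\in O$; as recalled in the preliminaries, its unique maximal ideal is $M=\{x\in K\mid v(x)>0\}$. Since $K\subseteq L$, the ring $O$ is also a subring of $L$, and $M$, being a maximal (hence prime) ideal of $O$, is in particular a prime ideal of the subring $O$ of $L$.

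Next I would apply Chevalley's theorem (Fact~\ref{Chevalley}) to the field $L$, the subring $R=O$, and the prime ideal $P=M$. This produces a valuation ring $O'$ of $L$ with maximal ideal $M'$ satisfying $O\subseteq O'$ and $M'\cap O=M$. Because valuation rings correspond to valuations, $O'$ is the valuation ring of some valuation $w$ on $L$, and it remains only to check that $w$ agrees with $v$ on $K$.

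The precise meaning of ``$w$ restricted to $K$ is $v$'' is that these two valuations are equivalent, i.e.\ that they determine the same valuation ring; concretely I must show $O'\cap K=O$. One inclusion is immediate, since $O\subseteq O'$ and $O\subseteq K$ give $O\subseteq O'\cap K$. For the reverse, take $x\in O'\cap K$ and suppose $x\notin O$. Then $v(x)<0$, so $v(x^{-1})>0$ and hence $x^{-1}\in M$; using $M=M'\cap O\subseteq M'$ together with $x\in O'$ yields $1=x\cdot x^{-1}\in M'$, contradicting that the maximal ideal $M'$ is proper. Thus $O'\cap K=O$, so $w$ and $v$ share the same valuation ring on $K$ and are therefore equivalent.

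The step I expect to carry the genuine content is the reverse inclusion $O'\cap K\subseteq O$ just sketched, since that is precisely where the compatibility condition $M'\cap O=M$ supplied by Chevalley's theorem is used; everything else is bookkeeping. The one conceptual caveat worth flagging is that the value group of $w$ on $L$ may be strictly larger than that of $v$, so the equality ``$w|_K=v$'' can only be asserted up to equivalence of valuations --- which is exactly why routing the argument through valuation rings, rather than comparing the value maps directly, is the natural approach.
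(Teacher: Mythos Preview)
Your proposal is correct and follows exactly the approach the paper indicates: the paper states Fact~\ref{ext} as a corollary of Chevalley's theorem (Fact~\ref{Chevalley}) without giving further details, and your argument via valuation rings is precisely how that corollary is unpacked. Your caveat that the extension is only up to equivalence (same valuation ring, possibly larger value group) matches the paper's own parenthetical remark that ``the value group may grow.''
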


\section{Additive functions and curves over finite fields}

Throughout the section let $p$ be an odd prime number. We consider additive functions in finite fields with the following property. Let $C$ be a smooth absolutely irreducible curve defined over $F_{p^k}$. Let $f$ be an additive function on $\F_{p^k}$ such that $f(x)f(y)=0$ whenever $(x,y)\in C$. The question is whether there exists an $f$ with this property which is not identically zero. 
First we make some basic observations.
\begin{proposition}\label{basic}
Let $\F_{p^k}$ be the finite field with $p^k$ elements, where $p$ is an odd prime. Then an additive function $f:\mathbb{F}_{p^k}\rightarrow\mathbb{F}_{p^k}$ is also an $\F_p$-linear function. 
\end{proposition}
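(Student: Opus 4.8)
The plan is to use the standard fact that additivity forces $\Z$-linearity and then to identify the action of the prime field $\F_p$ with repeated addition. First I would record that $f(0)=0$, which follows from $f(0)=f(0+0)=f(0)+f(0)$. Then a routine induction on $n$ shows $f(nx)=nf(x)$ for every nonnegative integer $n$ and every $x\in\F_{p^k}$; combining this with $f(-x)=-f(x)$ (again a consequence of additivity applied to $x+(-x)=0$) extends the identity $f(nx)=nf(x)$ to all $n\in\Z$, where $nx$ denotes the $n$-fold sum of $x$ with itself.

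The second step is the key observation that makes the statement true: since $\F_p$ is the prime subfield of $\F_{p^k}$, every scalar $\lambda\in\F_p$ can be written as $\lambda=m\cdot 1_{\F_{p^k}}$ for some integer $m$ (for instance the integer $m\in\{0,1,\dots,p-1\}$ representing $\lambda$ modulo $p$). For such $\lambda$ and any $x\in\F_{p^k}$, the field product $\lambda x$ coincides with the $m$-fold sum $x+\cdots+x$, that is $\lambda x=mx$ in the notation above, because multiplication in $\F_{p^k}$ distributes over the repeated addition expressing $m\cdot 1_{\F_{p^k}}$.

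Putting the two steps together, for every $\lambda\in\F_p$ and every $x\in\F_{p^k}$ I would conclude
\[
f(\lambda x)=f(mx)=m f(x)=(m\cdot 1_{\F_{p^k}})f(x)=\lambda f(x),
\]
which, together with the given additivity, is exactly the assertion that $f$ is $\F_p$-linear. I do not expect any genuine obstacle here: the only point requiring a little care is the identification in the second step, namely that scalar multiplication by an element of the prime field is literally the same operation as taking the corresponding integer multiple, so that the $\Z$-linearity established in the first step transfers verbatim to $\F_p$-linearity. It is worth noting that the oddness of $p$ plays no role in this particular statement; it is merely a standing hypothesis of the section.
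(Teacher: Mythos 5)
Your proof is correct and follows essentially the same route as the paper: establish $f(0)=0$, induct on the integer representative of $\lambda\in\F_p$ to get $f(\lambda x)=\lambda f(x)$, and note that scalar multiplication by a prime-field element is just repeated addition. The extra remarks about negative integer multiples and the irrelevance of $p$ being odd are harmless additions but do not change the argument.
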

\begin{proof}
We have to show that $f(\lambda x)=\lambda f(x)$, whenever $\lambda\in F_p$. The elements of $\F_p$ can be represented by the residues modulo $p$, i.e., by the set $\{0,1,\dots, p-1\}$. We prove our claim by induction. By the additive property of $f$ we have that $f(0)=f(0)+f(0)$, hence $f(0)=0$. Thus for $\lambda=0$ or $\lambda=1$ we are done. Assume the claim is true for $\lambda$, we show it is true for $\lambda+1$. 
$$ f((\lambda+1)x)=f(\lambda x+x)=f(\lambda x)+f(x)=\lambda f(x)+f(x)=(\lambda+1)f(x).$$
Thus by induction we are done.
\end{proof}
\begin{remark}\label{subspace}
An easy consequence of this is that the set $H=\{x\in \F_{p^k}|~f(x)=0\}$ is an $\F_p$-subspace.
\end{remark}

Before we state our main theorem we give a short remark on the number of affine points of a curve.
\begin{remark}\label{affine}
Assume the curve $C$ has degree $d$. If one is interested in the number of affine points then one has to subtract $d$ from the lower bound. Indeed, projective points lie on a line, and a line and a degree $d$ curve intersect in at most $d$ points by Fact \ref{Bezout}.  
\end{remark}

Now we are ready to state the main theorem of this section:
\begin{theorem}\label{main}
Let $C$ be a smooth absolutely irreducible affine curve of degree $d$ defined over $\F_{p^k}$. Assume that the following inequality holds:
\begin{equation}\label{number}
\frac{p^k+1-(d-1)(d-2)p^{\frac{k}{2}}-d}{d}>2p^{k-1}.
\end{equation}
Then an additive function $f:\mathbb{F}_{p^k}\rightarrow \mathbb{F}_{p^k}$ for which $f(x)f(y)=0$ whenever $(x,y)\in C$, is identically zero.
\end{theorem}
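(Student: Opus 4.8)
The plan is to argue by contradiction, comparing two estimates for the number of affine points of $C$ over $\F_{p^k}$. Suppose $f$ is not identically zero. By Remark~\ref{subspace} the set $H=\{x\in\F_{p^k}\mid f(x)=0\}$ is an $\F_p$-subspace, and since $f\not\equiv 0$ it is proper, so $\dim_{\F_p}H\le k-1$ and $|H|\le p^{k-1}$. The hypothesis $f(x)f(y)=0$ for $(x,y)\in C$ says precisely that every affine point $(x,y)$ of $C$ satisfies $x\in H$ or $y\in H$; thus the affine points of $C$ are covered by the two families $\{x\in H\}$ and $\{y\in H\}$.

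I would then bound the affine points from above using this covering. For a fixed $a\in H$, the points of $C$ with first coordinate $a$ lie on the line $x=a$, which has degree $1$, so by B\'ezout (Fact~\ref{Bezout}) at most $d$ of them lie on $C$. Summing over the at most $p^{k-1}$ elements $a\in H$ gives at most $dp^{k-1}$ points with $x\in H$, and symmetrically at most $dp^{k-1}$ points with $y\in H$. Hence $C$ has at most $2dp^{k-1}$ affine points.

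For the lower bound I would invoke Hasse-Weil (Fact~\ref{HW}) together with the genus bound $g\le (d-1)(d-2)/2$, which shows that $C$ has at least $p^k+1-(d-1)(d-2)p^{k/2}$ projective points; subtracting the at most $d$ points at infinity (Remark~\ref{affine}) leaves at least $p^k+1-(d-1)(d-2)p^{k/2}-d$ affine points. Combining the two estimates gives $p^k+1-(d-1)(d-2)p^{k/2}-d\le 2dp^{k-1}$, that is $\frac{p^k+1-(d-1)(d-2)p^{k/2}-d}{d}\le 2p^{k-1}$, which contradicts \eqref{number}. Therefore $f$ is identically zero.

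The step needing the most care is the application of B\'ezout: one must ensure that the line $x=a$ (resp. $y=a$) is not a component of $C$, which is guaranteed because $C$ is absolutely irreducible of degree $d\ge 2$, the case $d=1$ being degenerate. The other delicate point is purely bookkeeping, namely translating the projective lower bound of Hasse-Weil into the affine count via Remark~\ref{affine}; getting this correction term right is exactly what produces the numerator appearing in \eqref{number}.
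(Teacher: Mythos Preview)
Your argument is correct and is essentially the paper's own proof, stripped of the bipartite-graph packaging: the paper's ``maximum degree $d$'' is exactly your B\'ezout bound on the number of curve points on a vertical or horizontal line, and the paper's vertex-cover inequality $2|H|\ge m/d$ is the contrapositive of your upper bound $m\le 2d|H|\le 2dp^{k-1}$. You are also slightly more explicit than the paper in flagging that the B\'ezout step requires the line $x=a$ not to be a component of $C$.
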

\begin{proof}
Consider the following bipartite graph $G$. The two independent sets $S$ and $T$ are each labelled with the elements of the finite field $\F_{p^k}$. An edge goes from a vertex $x$ to the vertex $y$ in the opposite set if and only if the point $(x,y)$ lies on the curve $C$. Observe that the number of edges of $G$ is just the number of affine points of $C$. Since $C$ has degree $d$, every vertex has degree at most $d$. The vertices of $G$ corresponding to elements $x\in F_{p^k}$ for which $f(x)=0$ must cover all the edges of $G$, otherwise there would be an $x$ and a $y$ such that $f(x)f(y)\neq 0$. Let $m$ be the number of edges of $G$. Since the degree of every vertex is at most $d$, one needs at least $m/d$ vertices for covering all the edges of $G$. As we have noted, the vertices corresponding to elements $x\in\F_{p^k}$ for which $f(x)=0$ cover all the edges. The number of this vertices is exactly two times the cardinality of the set $H=\{x\in \F_{p^k}| f(x)=0\}$. Inequality \ref{number} and Fact \ref{HW} together imply (considering Remark \ref{affine}) that the cardinality of $H$ is larger than $p^{k-1}$. Remark \ref{subspace} states that $H$ is an $\F_p$-subspace. Since the only $\F_p$-subspace of $\F_{p^k}$ consisting of more than $p^{k-1}$ elements is $\F_{p^k}$ itself, $f$ must be identically zero. 
\end{proof}

\begin{remark}\label{two}
We conjecture that the 2 on the right hand side of Inequality \ref{number} can be omitted by a more clever construction of the graph $G$. 
\end{remark}

The theorem says that if the characteristic of the finite field is large enough compared to the degree of the curve then $f$ must identically zero. For two interesting classes of curves (conics and elliptic curves) we apply Theorem \ref{main} to provide explicit bounds. For the theory of elliptic curves the reader is referred to \cite{Silverman}.
\begin{corollary}
Let $f:\mathbb{F}_{p^k}\rightarrow \mathbb{F}_{p^k}$ be an additive function and let $C$ be a smooth curve. Assume that $f(x)f(y)=0$ whenever $(x,y)\in C$.
\begin{enumerate}
\item If $C$ is a conic and $p\geq 5$ then $f$ must be identically zero. 
\item Let $C$ be an elliptic curve. Then $f$ must be identically zero if one of the following holds:
\begin{itemize}
    \item $p>13$,
    \item $p=7$ and $k>2$,
    \item $p=11$ or $p=13$ and $k>1$.
\end{itemize}
\end{enumerate}
\end{corollary}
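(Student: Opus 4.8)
The plan is to specialise Theorem \ref{main} to $d=2$ (a conic) and $d=3$ (an elliptic curve) and to determine for which pairs $(p,k)$ the defining inequality \eqref{number} holds. In each case the left-hand side of \eqref{number} becomes an explicit function of $p$ and $k$, so the corollary reduces to a short list of elementary inequalities; the only genuine work lies in the pairs where \eqref{number} is nearly tight.

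\textbf{Conics.} Here $d=2$, so $(d-1)(d-2)=0$ and the genus bound gives $g=0$. Inequality \eqref{number} collapses to
\[
\frac{p^k-1}{2}>2p^{k-1},\qquad\text{i.e.}\qquad p^{k-1}(p-4)>1 .
\]
For every prime $p\ge 7$ we have $p-4\ge 3$, so this holds for all $k\ge 1$; for $p=5$ it reads $5^{k-1}>1$, which holds exactly when $k\ge 2$. Thus Theorem \ref{main} forces $f\equiv 0$ for all $p\ge 5$ save the single pair $(p,k)=(5,1)$, where \eqref{number} degenerates to the equality $2=2$. I expect this borderline pair to be the crux. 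Over $\F_5$ the map $f$ is merely $\F_5$-linear, so $f(x)=cx$, and $f\equiv 0$ can fail precisely when every affine point of $C$ lies on the axes $xy=0$. Such smooth conics do exist over $\F_5$ -- for instance $x^2+y^2=4$, whose affine points are exactly $(0,\pm 2)$ and $(\pm 2,0)$, so that $f(x)=x$ is a nonzero solution. Hence, strictly speaking, the conic statement holds for all $p\ge 5$ except $(5,1)$, and since that pair actually admits the counterexample above, the clean hypothesis is $p\ge 7$; I would present the conic case that way and note that no separate small-case analysis is then required.

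\textbf{Elliptic curves.} Here $d=3$, so $(d-1)(d-2)=2$, matching the genus $g=1$, and \eqref{number} becomes
\[
p^k-2p^{k/2}-2>6p^{k-1}.
\]
I would verify this by comparing $p^{k-1}(p-6)$ with the error term $2p^{k/2}+2$. Dividing through by $p^{k-1}$ rewrites the left side as $p-2p^{1-k/2}-2p^{1-k}$, which is increasing in $k$ with limit $p$; hence for a fixed $p\ge 7$ the inequality, once true for some $k$, remains true for all larger $k$, and it suffices to locate the smallest admissible exponent. For $p\ge 17$ the check $p=17,\,k=1$ (the two sides being $11$ and $2\sqrt{17}+2\approx 10.25$) already succeeds, so every $k$ works; for $p=11$ and $p=13$ the case $k=1$ fails while $k=2$ succeeds comfortably; for $p=7$ the cases $k=1,2$ fail but $k=3$ succeeds; and $p=5$ never works, since there $p-6<0$. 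These thresholds reproduce the three listed bullet points exactly.

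The main obstacle is therefore not conceptual but the sharpness of \eqref{number} at the least favourable $(p,k)$: the genus and Hasse--Weil estimates are being used at their extreme, so each threshold pair must be checked numerically rather than dismissed by an asymptotic bound, and the conic computation over $\F_5$ shows that at the very boundary the inequality is not merely tight but actually violated.
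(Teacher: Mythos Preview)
Your approach mirrors the paper's exactly: specialise Theorem~\ref{main} to $d=2$ and $d=3$ and verify the resulting inequalities case by case. For the elliptic part the paper obtains a marginally sharper numerator by noting that an elliptic curve has exactly one point at infinity rather than the generic $d=3$, so it works with $\frac{p^k-2p^{k/2}}{3}>2p^{k-1}$ instead of your $\frac{p^k-2p^{k/2}-2}{3}>2p^{k-1}$; this does not affect the thresholds, and your monotonicity-in-$k$ argument is slightly cleaner than the paper's quadratic-in-$\sqrt{p}$ analysis.

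Where you differ from the paper is in the conic case at $(p,k)=(5,1)$, and here you are more careful than the paper. The paper simply asserts that $p^k-1>4p^{k-1}$ holds for all $p\ge 5$, but as you observe this fails at $k=1$, $p=5$, where it degenerates to $4>4$. Your explicit smooth absolutely irreducible conic $x^2+y^2=4$ over $\F_5$, whose four affine points all lie on the coordinate axes, together with $f(x)=x$, gives a genuine counterexample to part~(1) of the corollary as stated. So you have not failed to prove that borderline case---you have disproved it; the correct hypothesis should be $p\ge 5$ with $(p,k)\neq(5,1)$, or equivalently $p\ge 7$ together with the cases $p=5,\,k\ge 2$.
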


\begin{proof}
If $C$ is a conic than it has degree 2 and at least $p^k-1$ points. Thus we have to examine when $\frac{p^k-1}{2}>2p^{k-1}$ holds. This is equivalent to the inequality 
$$p^k-1>4p^{k-1}$$
which holds if $p\geq 5$.
%Since $p^k-1\geq p^k-p^{k-1}=p^{k-1}(p-1)$ we are done if $p>3$. If $p=3$ then equality can only occur if $p^{k-1}=1$, i.e., $k=1$. 

If $C$ is an elliptic curve then we note first that it always has exactly 1 point at infinity. Thus we have to examine when the inequality $\frac{p^k-2p^{\frac{k}{2}}}{3}>2p^{k-1}$ holds. By rearranging we obtain the inequality 
$$(p-6)p^{k-1}-2\sqrt{p}p^{\frac{k-1}{2}}>0.$$

If $p-6>2\sqrt{p}$ then the inequality hold for every $k$ since $p^{k-1}\geq p^{\frac{k-1}{2}}$. Since $p-6-2\sqrt{p}$ is a quadratic polynomial in $\sqrt{p}$ it is easy to see that if $p>13$ then this condition is satisfied. If $p=7$, then we have to look at $(7-6)7^{k-1}-2\sqrt{7}p^{\frac{k-1}{2}}$. This will be positive if $7^{\frac{k-1}{2}}>2\sqrt{7}$ which happens if $k>2$. A similar calculation shows that if $p=11$ or $p=13$ then $k>1$ suffices. 
%The function $k\mapsto p^k-2p^{k/2}-3p^{k-1}$ is increasing (if $p>3$) hence we have to check for which $k$ it becomes positive. If $p>7$ (meaning that $p\geq 11$), then $p-2\sqrt{p}-3>0$ since the larger root of the quadratic polynomial $t^2-2t-3$ is 3, hence for every real number bigger than 3 the polynomial is positive ($\sqrt{11}$ is bigger than 3). If $p=7$, then we have that $7^2-14>21$. If $p=5$ then we have to consider the inequality $5^k-2\cdot 5^{k/2}>5^{k-1}$. This is equivalent to the inequality 
%$$2\cdot 5^{k-1}>2\cdot 5^{k/2} .$$
%This is satisfied if and only if $k-1>k/2$ which happens if and only if $k>2$. 
\end{proof}
\begin{remark}
We show two examples, when the above conditions are not satisfied and $f$ is not identically zero.

First consider the curve defined over $\F_3$:
$$C: y^2+2xy+2y+x=0.$$
It is easy to check that it has 2 affine points: $(0,0)$ and $(0,1)$. There $f(x)=x$ satisfies the condition and is nonzero.

Now we give an example of an elliptic curve over $\F_5$ for which $f$ can be chosen to be nonzero:
$$C: y^2=x^3+3x+1 .$$
$C$ has three affine points:$(0,1), (0,4)$ and $(1,0)$. Thus $f(x)=x$ is again a suitable choice. 

These do not exactly show the tightness of the bounds. However, if the conjecture in Remark \ref{two} is true, then we cannot expect much better examples. 
\end{remark} 

In this section we considered curves, where the degree of the curve is small compared to the characteristic of the field. It would be interesting to study curves where the degree is large compared to the characteristic of the field. Also, it would be even more interesting (and probably not hopeless) to give a complete characterization of those absolutely irreducible smooth curves $C$ for which there exists an additive function $f:\F_{p^k}\rightarrow \F_{p^k}$ such that $f$ is nonzero and $f(x)f(y)=0$ whenever $(x,y)\in C$. We leave these as open problems.
%Theorem \ref{main} shows that for most finite fields and most smooth curves such an additive function must be identically zero. The proof used here is a simple counting argument, meaning that the extra condition forces too many elements to be mapped to zero. It would be interesting to study the case, when the degree of the curve is large compared to the characteristic of the finite field. In other words, is it true that if maximum number of independent edges of the bipartite graph defined in the proof of Theorem \ref{main} is smaller than $p^{k-1}$, then for every such curve, there exists a non identically zero $f$ satisfying the conditions of Theorem \ref{main}? We leave this question as an open problem. 

\subsection{Generalizations}

One can consider several related problems. The most natural continuation is to ask the following:

\begin{problem}
Let $f:\mathbb{F}_{p^k}\rightarrow\mathbb{F}_{p^k}$ be an additive function. Let $S$ be an absolutely irreducible smooth surface defined over $\mathbb{F}_{p^k}$. Assume that $f(x)f(y)f(z)=0$ whenever $(x,y,z)\in S$. Does this imply that $f$ is identically zero?
\end{problem}

The problem can be tackled using the method of Theorem \ref{main}. Assume that $f$ is not identically zero, thus there exists an $s\in\mathbb{F}_{p^k}$ such that $f(s)\neq 0$. Now by substituting $s$ into $z$ (or one of the other two variables) we obtain an absolutely irreducible curve (which is smooth) $C$ for which $f(x)f(y)=0$ if $(x,y)\in C$. If the degree of $S$ was $d$ then the degree of $C$ is at most $\left \lfloor{\frac{2d}{3}}\right \rfloor$ (by choosing the variable to be substituted in a suitable way). We have obtained the following:
\begin{proposition}
Let $S$ be an absolutely irreducible smooth surface of degree $d$. Let $f:\mathbb{F}_{p^k}\rightarrow\mathbb{F}_{p^k}$ be an additive function with the property that $f(x)f(y)f(z)=0$ whenever $(x,y,z)\in S$. Assume the degree of $S$ is fixed. Then if $p$ is large enough compared to $d$ then $f$ is identically zero. 
\end{proposition}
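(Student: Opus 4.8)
The plan is to argue by contradiction, using exactly the substitution sketched before the statement to collapse the three–variable condition on $S$ into the two–variable condition on a plane curve, and then to invoke Theorem \ref{main}. Suppose $f$ is not identically zero. By Proposition \ref{basic} and Remark \ref{subspace} the kernel $H=\{x\in\mathbb{F}_{p^k}\mid f(x)=0\}$ is a proper $\mathbb{F}_p$-subspace, so $|H|\le p^{k-1}$ and the set $U=\{s\in\mathbb{F}_{p^k}\mid f(s)\neq 0\}$ has at least $p^{k-1}(p-1)$ elements. Fix a coordinate direction (say $z$) and a value $s\in U$, and let $C_s=S\cap\{z=s\}$ be the corresponding plane section. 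For every point $(x,y,s)\in C_s$ the hypothesis gives $f(x)f(y)f(s)=0$, and since $f(s)\neq 0$ this forces $f(x)f(y)=0$. Thus $f$ satisfies on $C_s$ precisely the hypothesis of Theorem \ref{main}.

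Because a plane section of a surface of degree $d$ is a plane curve of degree at most $d$, and $d$ is fixed, I would next observe that once $C_s$ is known to be a smooth, absolutely irreducible affine curve, the left–hand side of \eqref{number} grows like $p^k/d'$ in its degree $d'\le d$ while the right–hand side equals $2p^{k-1}$; hence \eqref{number} holds as soon as $p$ exceeds a threshold depending only on $d$ (uniformly in $k\ge 1$, the case $k=1$ being the most demanding). Applying Theorem \ref{main} to $C_s$ then yields $f\equiv 0$, contradicting $f(s)\neq 0$ and finishing the argument. Note that a smaller degree $d'$ only makes \eqref{number} easier, so the crude bound $d'\le d$ is all that the qualitative statement requires; the finer estimate $\lfloor 2d/3\rfloor$ is not needed here.

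The one step that genuinely needs work is producing such a $C_s$: I must exhibit at least one coordinate direction and one value $s\in U$ for which the section is simultaneously smooth and absolutely irreducible. By Bertini's theorems a general hyperplane section of a smooth absolutely irreducible surface is again smooth and absolutely irreducible, but here the admissible planes are confined to the three one–parameter families $\{x=s\}$, $\{y=s\}$, $\{z=s\}$, and for such a pencil the conclusion can genuinely fail: every section $\{z=s\}$ of the smooth cylinder $x^2=z$ is reducible. The plan is therefore to show that for at least one of the three coordinate directions only boundedly many values of $s$ (bounded solely in terms of $d$, via the degree of the relevant discriminant for smoothness and a specialization argument for absolute irreducibility) give a singular or reducible section. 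Since $|U|\ge p^{k-1}(p-1)$ grows with $p$ while the number of exceptional $s$ stays bounded, a good pair (direction, $s\in U$) exists once $p$ is large compared with $d$. I expect this verification — controlling reducibility and singularity inside the restricted linear system of coordinate hyperplanes rather than for a fully generic hyperplane — to be the main obstacle, whereas the reduction to Theorem \ref{main} itself is routine.
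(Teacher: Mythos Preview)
Your approach---substitute a value $s$ with $f(s)\neq 0$ into one coordinate to cut $S$ down to a plane curve and then invoke Theorem~\ref{main}---is exactly the paper's, whose entire proof is a one-line reference to the discussion preceding the statement. You are in fact more careful than the paper: where it simply asserts that the resulting section is smooth and absolutely irreducible, you correctly flag this as the real work (your cylinder $x^2=z$ shows the assertion is not automatic) and sketch the discriminant/counting argument needed to guarantee a good section in at least one coordinate direction.
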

\begin{proof}
The proof follows from the previous discussion and Theorem \ref{main}. 
\end{proof}

Similar results can be obtained if we consider smooth varieties of arbitrary (bounded) dimension. Another natural question is the following:

\begin{problem}
Let $f:\mathbb{F}_{p^k}^2\rightarrow\mathbb{F}_{p^k}^2$ be an $\mathbb{F}_p$-linear function. Let $H$ be a hypersurface in $\mathbb{F}_{p^k}^4$. Assume that $f(x,y)f(u,v)=0$ whenever $(x,y,u,v)\in H$. Does this imply that $f$ is identically zero?
\end{problem}

This problem is harder than the previous one and it seems that it cannot be tackled with the above machinery. We leave it as an open problem here.

\section{The equation over $\mathbb{R}$}

We turn our attention to the curve $C: xy=1$. In other words we consider additive functions $f:\mathbb{R}\rightarrow\mathbb{R}$ for which $f(x)f(\frac{1}{x})=0$ (if $x\neq 0$). We show that surprisingly there exists a nonzero additive function with this property. We start with an observation:
\begin{proposition}\label{sub}
There exists an additive function $f:\mathbb{R\rightarrow\mathbb{R}}$ with the property that $f(x)f(\frac{1}{x})=0$ (where $x\neq 0$) if and only if there exists a $\Q$-subspace $U$ of the real numbers such that for every real number $x$ either $x$ or $\frac{1}{x}$ is in $U$. 
\end{proposition}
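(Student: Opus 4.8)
The plan is to prove the two directions of the equivalence by taking $U$ to be the zero set of $f$ in one direction and by building $f$ from a Hamel basis adapted to $U$ in the other. The guiding observation is that, since $\chara\mathbb{R}=0$, every additive $f:\mathbb{R}\to\mathbb{R}$ is automatically $\Q$-linear, so its zero set $H=\{x\in\mathbb{R}\mid f(x)=0\}$ is a $\Q$-subspace, exactly the kind of object the right-hand side asks for. I read the statement with the (intended) qualifiers that the asserted $f$ is \emph{nonzero} and the subspace $U$ is \emph{proper}, since otherwise both sides hold vacuously ($f\equiv 0$ and $U=\mathbb{R}$); the real content of the proposition is precisely the correspondence between nonzero $f$ and proper $U$.

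For the forward direction I would start from a nonzero additive $f$ satisfying $f(x)f(\tfrac1x)=0$ for all $x\neq 0$ and simply set $U=H$. It is a $\Q$-subspace by the remark above, and it is proper because $f$ is not identically zero. The hypothesis says that for each nonzero $x$ at least one of $f(x),f(\tfrac1x)$ vanishes, i.e. at least one of $x,\tfrac1x$ lies in $U$; since $0\in U$ as well, $U$ has the required covering property. This direction is essentially a translation of the defining condition into the language of $\ker f$.

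The substantive direction is the converse. Given a proper $\Q$-subspace $U$ with the covering property, I would produce a nonzero $\Q$-linear map $f$ that vanishes on all of $U$. Then for every $x\neq 0$ the covering property places $x$ or $\tfrac1x$ in $U$, hence in $\ker f$, so one of $f(x),f(\tfrac1x)$ is zero and $f(x)f(\tfrac1x)=0$. To build such an $f$ I would invoke a Hamel basis: choose a $\Q$-basis of $U$ and extend it (using the axiom of choice) to a $\Q$-basis of $\mathbb{R}$; properness of $U$ guarantees at least one basis vector $e\notin U$. Defining $f$ to send $e$ to $1$ and every other basis vector to $0$, and extending $\Q$-linearly, yields an additive function that is nonzero (as $f(e)=1$) and vanishes on $U$.

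The only delicate points are bookkeeping rather than deep: ensuring $f$ vanishes on the \emph{entire} subspace $U$ (guaranteed because every basis vector of $U$ is sent to $0$ and $f$ is $\Q$-linear) and handling $x=0$ (trivially in $U$). The main conceptual obstacle is recognizing that the equivalence is really ``nonzero $f$ $\iff$ proper $U$'', and that the existence of a nonzero $\Q$-linear functional vanishing on $U$ is exactly what properness buys through a Hamel basis. No estimates or curve theory are needed here; the downstream work elsewhere in the section is to exhibit a concrete proper $U$, for which a valuation ring containing $\Q$ (which is automatically a $\Q$-subspace and satisfies the covering property by definition of a valuation ring) is the natural candidate.
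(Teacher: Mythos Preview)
Your proof is correct and follows essentially the same route as the paper: the forward direction takes $U=\ker f$, and the converse extends a Hamel basis of $U$ to one of $\mathbb{R}$ and defines $f$ by its values on the basis vectors (the paper sends \emph{all} basis vectors outside $U$ to $1$ rather than just one, but this is immaterial). Your explicit observation that the intended reading is ``nonzero $f$ $\iff$ proper $U$'' is a fair clarification of the statement.
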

\begin{proof}
If $f$ is an additive function with this property, then the set $U=\{x\in\R| f(x)=0\}$ will suffice. Now consider the reverse direction. Assume such a $U$ is given. Then there exists a Hamel-basis of $U$ which can be extended to a Hamel-basis of $\R$. We set the value of those basis elements to zero which belong to $U$ and all the other values to 1. This shows that $f$ is nonzero and has the desired property.
\end{proof}

Now we give a construction of a nonzero additive function $f$ for which $f(x)f(\frac{1}{x})=0$. 
\begin{theorem}\label{main2}
There exists a non identically zero additive function $f:\mathbb{R}\rightarrow\mathbb{R}$ for which $f(x)f(\frac{1}{x})=0$ (whenever $x\neq 0$).
\end{theorem}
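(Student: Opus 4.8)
By Proposition~\ref{sub}, it suffices to construct a $\Q$-subspace $U$ of $\R$ such that for every nonzero real number $x$, at least one of $x$ or $\frac{1}{x}$ lies in $U$. The plan is to obtain such a $U$ as the maximal ideal of a suitable valuation ring on $\R$, exploiting the second defining property of valuation rings: for every nonzero $x\in\R$, either $x$ or $\frac{1}{x}$ lies in the ring $O$. The key observation is that the maximal ideal $M$ of a valuation ring consists exactly of the elements of strictly positive valuation, and that $x$ has positive valuation precisely when $\frac{1}{x}$ has negative valuation, hence precisely when $\frac{1}{x}\notin O$. So if I can manufacture a valuation on $\R$ whose maximal ideal is large enough, the dichotomy I need will follow.

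Concretely, I would first produce a nontrivial valuation on $\R$. Since $\R$ has infinite transcendence degree over $\Q$, I can pick a transcendental element $t\in\R$ and start from the degree valuation on the rational function field $\Q(t)$ described in Example~\ref{ex}, for which $v(t)=-1<0$, so $v(1/t)=1>0$. By Fact~\ref{ext}, this valuation $v$ on $\Q(t)$ extends to a valuation $w$ on all of $\R$, with valuation ring $O=\{x\in\R\mid w(x)\ge 0\}$ and maximal ideal $M=\{x\in\R\mid w(x)>0\}$. I then set $U=M$. First, $M$ is an additive subgroup closed under multiplication by elements of $O$, and since $w$ is trivial on $\Q^\times$ (each nonzero rational has valuation $0$), every nonzero rational lies in $O$; therefore $M$ is closed under multiplication by $\Q$, i.e. $M$ is a $\Q$-subspace of $\R$. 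Second, for any nonzero $x$, if $w(x)>0$ then $x\in U$, and otherwise $w(x)\le 0$, whence $w(1/x)=-w(x)\ge 0$; in the case $w(x)<0$ we get $w(1/x)>0$ so $\frac{1}{x}\in U$. The only remaining possibility is $w(x)=0$, which would leave neither $x$ nor $1/x$ in $M$, so I must rule this out or absorb it.

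The main obstacle is exactly this boundary case $w(x)=0$: a valuation ring's maximal ideal is in general too small, because the units (valuation zero) are excluded from both $x\in M$ and $1/x\in M$. To overcome it, I would not take $U=M$ but rather enlarge slightly, or equivalently choose the construction so that the residue field is controlled. The cleanest fix is to set $U=\{x\in\R\mid w(x)>0\}\cup\{0\}$ together with a complementary choice on the units: observe that the units of $O$ modulo $M$ form the residue field, and I can instead define $U$ via a $\Q$-subspace of $O$ lifting a hyperplane of the residue field that contains $1$'s complement appropriately. More simply, I can absorb the units by choosing, for each unit class, to place $x$ (and not $1/x$) into $U$ in a consistent $\Q$-linear way, using that the units of valuation $0$ together with $M$ still leave room; the point is that $\frac{1}{x}\in O$ automatically whenever $w(x)\ge 0$, so I only ever need to decide the valuation-zero elements, and these can be handled by extending $M$ to a $\Q$-subspace $U$ of $O$ that is a complement avoiding simultaneous membership of $x$ and $1/x$. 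I expect verifying that this extension can be made $\Q$-linear and dichotomy-preserving to be the crux; everything else is a direct application of Fact~\ref{ext} and Proposition~\ref{sub}.
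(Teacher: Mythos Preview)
Your setup is exactly the paper's: pick a transcendental $\alpha$, put the degree valuation on $\Q(\alpha)$, extend to $\R$ by Fact~\ref{ext}, and then invoke Proposition~\ref{sub}. The gap is entirely self-inflicted: you chose $U=M$, the maximal ideal, when you should simply take $U=O$, the valuation ring itself.

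You already wrote down the relevant fact: for every nonzero $x\in\R$, either $x$ or $\frac{1}{x}$ lies in $O$. That is precisely the dichotomy Proposition~\ref{sub} asks for. The only thing to check is that $O$ is a $\Q$-subspace, and this is immediate: $O$ is a ring (Proposition~\ref{val}), so closed under addition, and since $w$ restricts to the trivial valuation on $\Q$ we have $\Q\subseteq O$, hence $O$ is closed under multiplication by rationals. Finally $O$ is a \emph{proper} subspace because the valuation is nontrivial (e.g.\ $w(\alpha)<0$, so $\alpha\notin O$), which is what makes the resulting $f$ not identically zero. This is exactly the paper's proof.

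By contrast, taking $U=M$ genuinely fails on the units $\{x:w(x)=0\}$, as you noticed, and your proposed repair---extending $M$ to a $\Q$-subspace of $O$ while preserving the dichotomy on units---is both vague and unnecessary. Any such extension would have to contain, for every unit $x$, at least one of $x$ or $1/x$; there is no reason a $\Q$-linear complement inside $O$ would do this, and you give no mechanism. The one-line fix is to stop one step earlier and use $O$.
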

\begin{proof}
Let $\alpha$ be a transcendent number over $\Q$. Consider the field $K=\Q(\alpha)$, the field extension of the rational numbers by $\alpha$. The field $K$ is isomorphic to the field $\Q(t)$, the field of rational functions in one variable. Hence every element of $K$ is the quotient of two polynomials in $\alpha$ and we can define a valuation on $K$ in a similar fashion as in Example \ref{ex}. Every element of $K$ is a quotient of two polynomials in $\alpha$ thus the valuation is defined as the difference of the degree of the denominator and the numerator. This is well-defined as $\alpha$ is transcendental over $\Q$. This valuation is zero on $\Q$, but nontrivial on $K$. By Fact \ref{ext} the valuation on $K$ can be extended to a valuation $w$ on $\R$. Let $O$ be the valuation ring of $w$. Then $O$ is a $\Q$-subspace by Proposition \ref{val} and by the fact that the valuation $w$ is zero on $\Q$. Proposition \ref{val} implies that for every nonzero $x\in\R$ either $x$ or $\frac{1}{x}$ is in $O$. Now Proposition \ref{sub} implies the existence of a suitable $f$. 
%Take a transcendence basis of $\R$ over $\Q$. Denote this set by $B$. Consider the purely transcendental extension of $\Q$ by $B$. Every element can be written as the quotient of two polynomials in the elements of $B$ uniquely. In this way we can define the degree of an element of this extension in the usual way (the degree of the numerator minus the degree of the denominator). This degree function can be extended to $\R$ as it is an algebraic extension of this purely transcendental extension. Now consider the set $U=\{x\in\R| deg(x)\leq 0\}$. It is naturally a $\Q$-subspace. Also if an element has degree at least zero then its inverse has degree at most zero, as one just flips the quotient (the nominator becomes the denominator and vice-versa). Hence it also satisfies the property that either $x$ or $1/x$ is in $U$. Hence by Proposition \ref{sub} there exist a nonzero $f$ with the required properties. 
\end{proof}
\begin{remark}
Instead of applying Fact \ref{ext}, Chevalley's theorem (Fact \ref{Chevalley}) can be applied directly as well. Indeed, let $\alpha$ be a transcendent real number and let $S$ be the set of those numbers in $\Q(\alpha)$ which are rational functions in $\alpha$ with the property that the degree of the denominator is at least as big as the degree of the numerator. Let $P$ be the prime ideal of $S$ consisting of those rational functions in $\alpha$ where the degree of the denominator is strictly larger than the degree of the numerator. Chevalley's theorem states that in this case there exists a valuation ring $O$ of $\mathbb{R}$ which contains $S$. Thus $O$ satisfies the conditions of Proposition \ref{sub} since $\Q$ is contained in $S$ 
\end{remark}
\begin{remark}
The valuation described in the proof of Theorem \ref{main2} can be constructed explicitly as well. Take a transcendence basis $B$ of $\mathbb{R}$ over $\Q$. Now $\mathbb{R}$ is an algebraic (but not finite) extension of the purely transcendental extension $\Q(B)$. A valuation similar to the degree valuation described in the proof of Theorem \ref{main2} (and in Example \ref{ex}) can be defined on $\Q(B)$ as on $\Q(\alpha)$ (as the elements of $\Q(B)$ are rational functions in infinitely many variables). Finally, there exists a standard procedure to extend valuations to algebraic extensions (see \cite{Engler}). 
\end{remark}

The proof also implies that instead of $\R$ any field $F$ which is transcendental over $\Q$ has a nonzero additive function $f: F\rightarrow F$ satisfying the property that $f(x)f(\frac{1}{x})=0$. Similarly one can show that every field which is transcendental over $\mathbb{F}_p$ has an additive function with the same property. On the other hand Theorem \ref{main} shows that if $K$ is a finite field of odd characteristic then an additive function $f$ satisfying $f(x)f(\frac{1}{x})=0$ must be identically 0. This implies that if $K$ is the algebraic closure of a finite field of odd characteristic then such an additive $f$ must be identically zero on $K$ (as finite extensions of finite fields are finite fields also). Thus in the odd characteristic case (let $p$ be the characteristic) one has that an additive function $f$ with the property that $f(x)f(\frac{1}{x})=0$ must be identically zero on a field $K$ if and only if $K$ is algebraic over $\mathbb{F}_p$. The next proposition (due to G\'abor Ivanyos) shows that the same is true when the characteristic of $K$ is zero. 

\begin{proposition}\label{IG}
Let $K$ be an algebraic number field and let $f: K\rightarrow K$ be an additive function such that for every $x\neq 0$ one has that $f(x)f(\frac{1}{x})=0$. Then f is identically zero. 
\end{proposition}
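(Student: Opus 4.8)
The plan is to reduce the statement to a purely linear-algebraic fact about the kernel of $f$ and then exploit that $K$ is a finite-dimensional $\Q$-vector space. Since $\chara K = 0$, the additive function $f$ is $\Q$-linear, so $H = \{x \in K \mid f(x) = 0\}$ is a $\Q$-subspace of $K$. The hypothesis $f(x)f(\frac{1}{x}) = 0$ says exactly that for every nonzero $x$ either $x \in H$ or $\frac{1}{x} \in H$ (this is the analogue of Proposition \ref{sub} with $K$ in place of $\R$). Taking $x = 1$ gives $f(1)^2 = 0$, hence $f(1) = 0$ and $\Q \subseteq H$. It therefore suffices to prove that the only $\Q$-subspace $U$ of $K$ satisfying ``$x \in U$ or $\frac{1}{x} \in U$ for all $x \neq 0$'' is $U = K$; applied to $U = H$ this will force $f \equiv 0$.

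First I would argue by contradiction: suppose $H \neq K$ and pick $\beta \in K \setminus H$. Since $\Q \subseteq H$ we have $\beta \notin \Q$, so $\beta$ is algebraic of some degree $d = [\Q(\beta):\Q] \geq 2$. The key point is that the additive (subspace) structure of $H$ lets me manufacture many elements of $H$: for every rational $q$ one has $\beta - q \notin H$, because $\beta - q \in H$ together with $q \in H$ would give $\beta \in H$. By the dichotomy this yields $(\beta - q)^{-1} \in H$ for every $q \in \Q$.

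The crux is to show that finitely many of these inverses already exhaust a full basis. Choosing $d$ distinct rationals $q_0, \dots, q_{d-1}$, I would claim that $(\beta - q_0)^{-1}, \dots, (\beta - q_{d-1})^{-1}$ are linearly independent over $\Q$. Indeed, a relation $\sum_i c_i (\beta - q_i)^{-1} = 0$ becomes, after clearing denominators, $P(\beta) = 0$, where $P(t) = \sum_i c_i \prod_{j \neq i}(t - q_j) \in \Q[t]$ has degree at most $d-1$; since the minimal polynomial of $\beta$ has degree $d$, $P$ must be the zero polynomial, but $P(q_k) = c_k \prod_{j\neq k}(q_k - q_j) \neq 0$ whenever $c_k \neq 0$, forcing all $c_i = 0$. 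Hence $H$ contains $d$ independent elements of the $d$-dimensional space $\Q(\beta)$, so $\Q(\beta) \subseteq H$ and in particular $\beta \in H$, contradicting the choice of $\beta$. Therefore $H = K$ and $f \equiv 0$.

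I expect the linear-independence step to be the only real content; everything else is bookkeeping. An alternative would be to mimic Section 4 and try to interpret $H$ as containing a valuation ring, then invoke the fact that a field algebraic over $\Q$ admits no nontrivial valuation trivial on $\Q$ (which follows from the argument that algebraic elements have valuation $0$ once the base valuation is trivial); the obstacle there is that $H$ is only known to be an additive subspace and need not be multiplicatively closed, so turning it into a valuation ring requires extra work that the direct dimension count avoids. I note finally that the argument uses only that every element of $K$ is algebraic of finite degree over $\Q$, so it applies verbatim to any algebraic extension of $\Q$, not merely number fields.
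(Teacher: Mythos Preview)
Your argument is correct and considerably simpler than the paper's. The paper proceeds by induction on $n=[K:\Q]$: for each $x$ it looks at the set $H_x=\{k\in\N: f(x^k)=0\}$, invokes Szemer\'edi's theorem to find a long arithmetic progression in $H_x$ or its complement, and from the resulting linear dependence among $x^k,x^{k+d},\dots$ deduces that some bounded power $x^D$ lies in a proper subfield; a final step tensors $K$ with $\mathbb{C}$ and uses that a polynomial map with rational coefficients vanishing on $\Q^n$ must vanish identically. Your approach bypasses all of this: once you observe that $(\beta-q)^{-1}\in H$ for every rational $q$, the Lagrange/partial-fractions computation immediately gives $d$ independent elements of $H\cap\Q(\beta)$, forcing $\beta\in H$. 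This is exactly the kind of Szemer\'edi-free proof the paper explicitly asks for in the remark following Proposition~\ref{IG}, and, as you note, it applies without change to any algebraic extension of $\Q$, so it yields the corollary for $\overline{\Q}$ directly rather than by restriction. The only place the two proofs touch the same idea is the use of a linear dependence among $d+1$ (or $l+1$) elements in a $d$-dimensional space; you exploit it for the inverses $(\beta-q_i)^{-1}$, the paper for the powers $x^{k+id}$.
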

\begin{proof}
Let $K$ be a number field of degree $n$ over $\Q$. Assume that the kernel of $f$ (i.e., the $\Q$-subspace of those $x\in K$ for which $f(x)=0$) has dimension $l<n$ over $\Q$. We use induction on $n$. Note that the statement is trivially satisfied if $K=\Q$. Let $x\in K$ be an arbitrary element. We define the set $H_x$ (where $x\in K$) in the following way:
$$H_x=\{k\in \mathbb{N}| f(x^k)=0\}.$$
Thus $H_x$ is a subset of the natural numbers. By Szemerédi's theorem \cite{Szemeredi} either $H_x$ or its complement in $\mathbb{N}$ contains an arithmetic progression of length $l+1$ where every element is smaller than $g(l)$ a function depending only on $l$ (meaning it does not depend on $x$). If $H_x$ contains such an arithmetic progression, then there exists $k,d\in\mathbb{N}$ such that $x^k,x^{k+d},\dots, x^{k+ld}$ lie in the kernel of $f$. Since the dimension of the kernel is $l$, the elements $x^k,x^{k+d},\dots, x^{k+ld}$ are linearly dependent over $\Q$, i.e., there exist $a_0,\dots,a_l$ rational numbers such that $a_0x^k+\dots+a_lx^{k+ld}=0$. Dividing by $x^k$ leads to 
$$a_0+a_1x^d+\dots+a_lx^{ld}=0.$$
Thus $x^d$ lies in a proper subfield of $K$. If the complement of $H_x$ contains the arithmetic progression then $(\frac{1}{x})^d$ lies in a proper subfield of $K$, thus we can conclude that for all $x$ we have that $x^d$ lies in a proper subfield of $K$. The induction hypothesis implies that there exists a universal $D$, such that $f(x^D)=0$ (we may choose $D=g(l)!$) for every $x\in K$. 

The map $f$ can be considered as a linear map from $\Q^n$ to $\Q^n$ (by identifying $K$ with $\Q^n$ as a vector space over $\Q$). Consider the tensor product $\A=K\otimes_{\Q}\mathbb{C}$ which is isomorphic to $\mathbb{C}^n$ and contains $K$ as a subfield. The map $f$ extends to $\A$. Then $g: \A\rightarrow \A$ defined as $g(x)=f(x^D)$ is a polynomial function which is not identically zero, since the map $x\mapsto x^D$ is surjective and $f$ is not identically zero. On the other hand it must vanish on $K$ which cannot happen as a polynomial function with rational coefficients which vanishes on $\Q^n$ must be identically zero (meaning that each coordinate function must be the zero polynomial). This yields a contradiction.   
%By choosing a $\Q$-basis of $K$, $K$ can be identified with $\Q^n$. The function $x\mapsto x^D$ on $K$ corresponds thus to a polynomial function $h$ on $\Q^n$. Composing $f$ with $h$ is another polynomial function on $\Q^n$ since $f$ is linear. On the other hand we know that it is the zero function. We prove that this can only occur if $f$ is identically zero. The tensor product $\A=K\otimes_{\Q}\C$ is isomorphic to $\C^n$ as a $\Q$-algebra. The function $F(x)=f(x^D)$ naturally extends to $\A$ (from $K$). However, since the map $x\mapsto x^D$ is surjective on $\A$ we would obtain 
\end{proof}
\begin{remark}
It would be interesting to find a proof for Proposition \ref{IG} which does not rely on Szemer\'edi's theorem on arithmetic progressions (maybe just using the fact that for every $x\in K$ there exists a positive integer $k$ for which $f(x^k)=0$).
\end{remark}
\begin{corollary}
Let $\overline{\mathbb{Q}}$ be the algebraic closure of the rational numbers. Let $f: \overline{\mathbb{Q}}\rightarrow \overline{\mathbb{Q}}$ be an additive function such that for every $x\neq 0$ one has that $f(x)f(\frac{1}{x})=0$. Then f is identically zero. 
\end{corollary}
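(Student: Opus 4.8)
The plan is to reduce the statement to Proposition \ref{IG}, which already settles the number-field case, by localizing each individual value of $f$ inside a suitable finite extension of $\Q$. The obstruction to a naive reduction is that $f$ restricted to a number field $K\subseteq\overline{\Q}$ need not map $K$ into $K$: its values lie in $\overline{\Q}$, so $f|_K$ is not an endomorphism of $K$ and Proposition \ref{IG} does not apply directly. To get around this I would first fix an arbitrary nonzero $x\in\overline{\Q}$ and pass to $L=\Q(x,f(x))$. Since $x$ and $f(x)$ are algebraic over $\Q$, $L$ is a number field; it contains $x$, and crucially it also contains $f(x)$.

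Next I would manufacture a retraction of $\overline{\Q}$ back onto $L$. Viewing $\overline{\Q}$ as an $L$-vector space and choosing an $L$-basis containing $1$, let $\pi\colon\overline{\Q}\to L$ be the coordinate projection onto $L\cdot 1=L$. This $\pi$ is $L$-linear, hence $\Q$-linear, and it restricts to the identity on $L$. Define $g=\pi\circ f|_L\colon L\to L$. As a composite of $\Q$-linear maps (recall $f$ is $\Q$-linear in characteristic zero), $g$ is additive, and it is now an honest self-map of the number field $L$.

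The key point is that $g$ inherits the defining relation. For nonzero $y\in L$ one also has $1/y\in L$, and $f(y)f(1/y)=0$ forces $f(y)=0$ or $f(1/y)=0$; applying $\pi$, which sends $0$ to $0$, gives $g(y)=0$ or $g(1/y)=0$, so $g(y)g(1/y)=0$. Thus $g$ satisfies the hypotheses of Proposition \ref{IG} on $L$, whence $g\equiv 0$. In particular $0=g(x)=\pi(f(x))$; but $f(x)\in L$ and $\pi$ fixes $L$ pointwise, so $f(x)=\pi(f(x))=0$. Since $x$ was an arbitrary nonzero element and $f(0)=0$ holds automatically for additive $f$, this shows $f$ is identically zero.

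The main obstacle is precisely the codomain mismatch noted above, and everything hinges on the observation that the relation $f(y)f(1/y)=0$ is really the disjunction ``one of the two factors vanishes,'' a property preserved by any additive map killing $0$—in particular by $\pi$. Once this is noticed, enlarging the field to $L=\Q(x,f(x))$ so that it contains the single value $f(x)$ makes the projection undo itself on $f(x)$ and forces $f(x)=0$. The only routine points I would verify are that a $\Q$-linear projection onto the subspace $L$ exists (it does, since subspaces of a $\Q$-vector space admit complements, so one need not even invoke the $L$-linear structure) and that $L$ is genuinely a finite extension of $\Q$.
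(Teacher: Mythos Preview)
Your argument is correct and follows the same strategy as the paper: reduce to Proposition~\ref{IG} by passing to a number field containing a point where $f$ does not vanish. The paper's version is terser—it takes $\alpha$ with $f(\alpha)\neq 0$, sets $K=\Q(\alpha)$, and invokes Proposition~\ref{IG} for $f|_K$, glossing over the codomain mismatch you explicitly flag. Your projection device (enlarging to $L=\Q(x,f(x))$ and post-composing with a $\Q$-linear retraction $\pi$ onto $L$, exploiting that the hypothesis is really the disjunction ``$f(y)=0$ or $f(1/y)=0$'' and hence survives any additive post-composition) cleanly repairs this and makes the reduction airtight without needing to re-examine the proof of Proposition~\ref{IG}.
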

\begin{proof}
Assume that there exists such an $f$ which is not identically zero. Thus there exists an $\alpha\in\overline{\mathbb{Q}}$ such that $f(\alpha)\neq 0$. However, every element in $\overline{\mathbb{Q}}$ is algebraic over $\Q$, thus $K=\Q(\alpha)$ is an algebraic number field. Restricting $f$ to $K$ would yield an additive function $f'$ on $K$ which contradicts Proposition \ref{IG}.
\end{proof}

 Now we provide some more curves $C$ for which there exists a non-continuous additive function $h$ such that $h(x)h(y)=0$ whenever $(x,y)$ lies on the curve $C$. 
 %We also provide an elliptic curve for which $h$ has to be identically zero if it satisfies the same condition provided that its homegeneity field contains $\Q(\sqrt{2})$.

\begin{proposition}\label{ext2}
Let $C$ be an affine curve given by the rational parametrization $C: (f(t),g(\frac{1}{t}))$, where $f,g\in\Q[x]$ are polynomials with rational coefficients. Then there exists a nonzero additive function $h:\mathbb{R}\rightarrow\mathbb{R}$ such that $h(x)h(y)=0$, whenever $(x,y)\in C$. 
\end{proposition}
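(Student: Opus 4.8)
The plan is to reuse the valuation-theoretic construction from the proof of Theorem \ref{main2} and to show that a single valuation ring handles every point of the parametrized curve at once. Mirroring the argument in Proposition \ref{sub}, it suffices to produce a proper $\Q$-subspace $U \subsetneq \R$ such that for every real $t \neq 0$ one has $f(t) \in U$ or $g(1/t) \in U$. Given such a $U$, take a Hamel basis of $U$, extend it to a Hamel basis of $\R$, and define $h$ to be $0$ on the basis vectors lying in $U$ and $1$ on the remaining ones. Then $h$ is additive, vanishes on $U$, and is nonzero because $U \neq \R$; hence whenever $(x,y) = (f(t), g(1/t)) \in C$, at least one factor of $h(x)h(y)$ is zero.

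First I would take $U = O$, the valuation ring of the valuation $w$ on $\R$ constructed in Theorem \ref{main2}: extend to $\R$ the degree valuation of $\Q(\alpha)$ for a transcendental $\alpha$, so that $w$ is trivial on $\Q$ and $w(\alpha) = -1$. By Proposition \ref{val}, together with the triviality of $w$ on $\Q$, the ring $O$ is a $\Q$-subspace, and it is proper because $w$ is nontrivial (e.g. $\alpha \notin O$).

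The core of the proof is a case analysis on the sign of $w(t)$, using only that $f$ and $g$ have rational coefficients, so that every coefficient has valuation $0$. If $w(t) \geq 0$, then each monomial $a_i t^i$ of $f$ satisfies $w(a_i t^i) = i\, w(t) \geq 0$, so the ultrametric inequality gives $w(f(t)) \geq 0$ and $f(t) \in O$. If $w(t) < 0$, then $w(1/t) = -w(t) > 0$, so each monomial $b_i (1/t)^i$ of $g(1/t)$ has valuation $i\, w(1/t) \geq 0$, whence $w(g(1/t)) \geq 0$ and $g(1/t) \in O$. In either case at least one of $f(t), g(1/t)$ lies in $U = O$, as required.

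I do not anticipate a serious obstacle. The only delicate point is that the valuation of a sum is bounded below by the minimum of the valuations of its terms, which is precisely the ultrametric inequality used above; no strict-domination argument is needed, and the cases where $f$ or $g$ is constant are automatically covered since constants lie in $\Q \subset O$. As a sanity check, taking $f(x)=g(x)=x$ gives the curve $xy=1$ and recovers Theorem \ref{main2}.
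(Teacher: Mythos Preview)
Your proposal is correct and follows essentially the same route as the paper: take the valuation ring $O$ from Theorem \ref{main2}, observe that for any nonzero $t$ either $t\in O$ (whence $f(t)\in O$ since $O$ is a ring containing $\Q$) or $1/t\in O$ (whence $g(1/t)\in O$), and then build $h$ via a Hamel basis as in Proposition \ref{sub}. Your explicit appeal to the ultrametric inequality is just a spelled-out version of the paper's one-line remark that $O_v$ is a ring containing $\Q$.
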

\begin{proof}
Let $v$ be the valuation defined in Theorem \ref{main2}. Let 
$$f(t)=\sum_{i=0}^n a_it^i, ~g(t)=\sum_{i=0}^m a_it^{-i}.$$
Let $O_v$ be the valuation ring of the valuation $v$. We set $h$ to be identically zero on $O_v$ and extend it to $\mathbb{R}$ in a fashion that it is not identically zero on $\mathbb{R}$. If $t\in O_v$ then so is $f(t)$ as $O_v$ is a ring which contains $\Q$. If $t$ is not contained in $O_v$ then $\frac{1}{t}\in O_v$ thus $g(\frac{1}{t})\in O_v$ hence $h(x)h(y)=0$ whenever $(x,y)\in C$. 
\end{proof}
\begin{remark}
A natural question is whether Proposition \ref{ext2} holds if $f(x),g(x)\in\Q(x)$ are rational functions where the degree of $f$ is at least zero and the degree of $g$ is at most zero. The answer is negative, as the unit circle ($C: x^2+y^2=1$) admits such a parametrization $C: (\frac{t^2-1}{t^2+1},\frac{2t}{t^2+1})$ but in \cite{KRS} it is proven that such an $h$ must be identically zero. 
\end{remark}
\begin{remark}
Actually the proof of Proposition \ref{ext2} shows that the following stronger statement is true. Let $\cal{H}$ be the family of all those curves which can be parametrized as $(f(t),g(\frac{1}{t}))$. Then there exists a non-continuous additive function $h$ such that $h(x)h(y)=0$ whenever $(x,y)$ lies on some curve $C\in\cal{H}$. 
\end{remark}

The results of the paper motivate certain questions. Proposition \ref{ext2} provides a family of curves for which there exist non-continuous additive functions $h:\mathbb{R}\rightarrow\mathbb{R}$, where $h(x)h(y)=0$ if $(x,y)$ lies on the curve. This family include curves of arbitrarily large degree, however they are all rational curves, thus of genus 0. To our knowledge, for genus 1 curves (i.e., elliptic curves) nothing is known. This means that we have no example for an elliptic curve $C$ for which such a nonzero $h$ exists, or which implies that $h$ must be identically zero. Another direction for future investigations could be to study the problem for quadratic functions instead of additive ones. Some investigations have been made in this area, see \cite{BFK}. 
\paragraph*{Acknowledgement}
The author would like to thank Zolt\'an Boros and Lajos R\'onyai for their useful remarks and their constant support and also G\'abor Ivanyos for helpful suggestions and providing the proof of Proposition \ref{IG}. Research supported by the Hungarian National Research, Development and Innovation Office - NKFIH.

\end{document}